\begin{document}
% end header

\newcommand{\qr}[1]{\eqref{#1}} 
\newcommand{\var}[2]{\operatorname{var}_{#1}^{#2}}
\newcommand{\secref}[1]{\ref{#1}}
\newcommand{\varn}[1]{\var{#1}{}}

\newcommand{\ie}{{\it i.e.}\xspace} 
\newcommand{\eg}{{\it e.g.}\xspace}
\newcommand{\nc}{\newcommand}
\renewcommand{\frak}{\mathfrak}
\providecommand{\cal}{\mathcal}

\renewcommand{\bold}{\mathbf}
%
%\def\descriptionlabel#1{\hspace\labelsep \normalshape\bf #1}
%
%\renewcommand{\labelitemi}{---}
%\renewcommand{\labelenumi}{(\arabic{enumi})}
%%%%%
%
% Different parentheses
%
\newcommand{\gparens}[3]{{\left#1 #2 \right#3}}
\newcommand{\parens}[1]{\gparens({#1})} 
\newcommand{\brackets}[1]{\gparens\{{#1}\}} 
\newcommand{\hakparens}[1]{\gparens[{#1}]} 
\newcommand{\angleparens}[1]{\gparens\langle{#1}\rangle} 
\newcommand{\floor}[1]{\gparens\lfloor{#1}\rfloor} 
\newcommand{\ceil}[1]{\gparens\lceil{#1}\rceil}
\newcommand*{\Setof}[1]{\,\brackets{#1}\,} 
\newcommand*{\norm}[2][]{\gparens\|{#2}\|#1}
\newcommand*{\normsq}[2][]{\gparens\|{#2}\|^2#1}
%

%% omgivningar f{\"o}r satser o dyl
\numberwithin{equation}{section}

\newcommand{\theoremname}{Theorem.}
\newcommand{\corollaryname}{Corollary.}
\newcommand{\lemmaname}{Lemma.}
\newcommand{\propositionname}{Proposition.}
\newcommand{\conjecturename}{Conjecture.}
\newcommand{\definitionname}{Definition.}
\newcommand{\examplename}{Example.}
\newcommand{\remarkname}{Remark.}
\newcommand{\pfname}{Proof.}

\newenvironment{pf}{\vskip-\lastskip\vskip\medskipamount{\it\pfname}}%
                      {$\square$\vskip\medskipamount\par}

\newenvironment{pfof}[1]{\vskip-\lastskip\vskip\medskipamount{\it
    Proof of #1.}}%
                      {$\square$\vskip\medskipamount\par}

\newtheorem{thm}{Theorem}[section]
\newtheorem{theorem}[thm]{Theorem}%[subsection]

\newtheorem{thmnn}{Theorem}
\renewcommand{\thmnn}{}

\newtheorem{cor}[thm]{Corollary}%[subsection]
\newtheorem{corollary}[thm]{Corollary}%[subsection]
\newtheorem{cornn}{Corollary}
 \renewcommand{\thecornn}{}

\newtheorem{prop}[thm]{Proposition}%[subsection]
\newtheorem{proposition}[thm]{Proposition}%[subsection]

\newtheorem{propnn}{Proposition}
\renewcommand{\thepropnn}{}

\newtheorem{lemma}[thm]{Lemma} %[subsection]
\newtheorem{lemmat}[thm]{Lemma}%[subsection]
\newtheorem{lemmann}{Lemma}
\renewcommand{\thelemmann}{}

\theoremstyle{definition}
\newtheorem{defn}[thm]{Definition}%[subsection]
\newtheorem{definition}[thm]{Definition}%[subsection]
\newtheorem{defnn}{Definition}
\renewcommand{\thedefnn}{}
\newtheorem{conj}[thm]{Conjecture}
\newtheorem{axiom}{Axiom}

\theoremstyle{definition}
\newtheorem{exercise}{Exercise}[subsection]
\newtheorem{exercisenn}{Exercise}
\renewcommand{\theexercisenn}{}
\newtheorem{remark}[thm]{Remark}%[subsection]
\newtheorem{remarks}[thm]{Remarks}%[subsection]
\newtheorem{remarknn}{Remark}
\renewcommand{\theremarknn}{}
\newtheorem{example}[thm]{Example} %[subsection]
\newtheorem{examples}[thm]{Examples} %[subsection]
\newtheorem{examplenn}{Example}[subsection]
\newtheorem{blanktheorem}{}[subsection]
\newtheorem{question}[thm]{Question}%[subsection]
\renewcommand{\theexamplenn}{}

%kommandon f{\"o}r att referera till satser o.dyl

\nc{\Theorem}[1]{Theorem~{#1}}
\nc{\Th}[1]{({\sl Th.}~#1)}
\nc{\Thd}[2]{({\sl Th.}~{#1} {#2})}
\nc{\Theorems}[2]{Theorems~{#1} and ~{#2}}
\nc{\Thms}[2]{({\it Thms. ~{#1} and ~{#2}})}
\nc{\Lemmas}[2]{Lemma~{#1} and ~{#2}}

\nc{\manga}[6]{({\it Thms. ~ #1, ~ #2, ~ #3,\\ ~ #4, ~ #5, ~ #6})}

\nc{\Prop}[1]{({\sl Prop.}~{#1})}
\nc{\Proposition}[1]{Proposition~{#1}}
\nc{\Propositions}[2]{Propositions~{#1} and ~{#2}}
\nc{\Props}[2]{({\sl Props.}~{#1} and ~{#1})}
\nc{\Cor}[1]{({\sl Cor.}~{#1})}
\nc{\Corollary}[1]{Corollary~{#1}}
\nc{\Corollaries}[2]{Corollaries~{#1} and ~{#2}}
\nc{\Definition}[1]{Definition~{#1}}
\nc{\Defn}[1]{({\sl Def.}~{#1})}
\nc{\Lemma}[1]{Lemma~{#1}} 
\nc{\Lem}[1]{({\sl Lem.} ~{#1})} 
\nc{\Eq}[1]{equation~({#1})}
\nc{\Equation}[1]{Equation~({#1})}
\nc{\Section}[1]{Section~{#1}}
\nc{\Sections}[1]{Sections~{#1}}
\nc{\Sec}[1]{({\sl Sec.} ~{#1})} 
\nc{\Chapter}[1]{Chapter~{#1}}
\nc{\Chapt}[1]{({\sl Ch.}~{#1})}

\nc{\Ex}[1]{{\sl Ex.}~{#1}}
\nc{\Exa}[1]{{\sl Example}~{#1}}
\nc{\Example}[1]{{\sl Example}~{#1}}
\nc{\Examples}[1]{{\sl Examples}~{#1}}
\nc{\Exercise}[1]{{\sl Exercise}~{#1}}

\nc{\Rem}[1]{({\sl Rem.}~{#1})}
\nc{\Remark}[1]{{\sl Remark}~{#1}}
\nc{\Remarks}[1]{{\sl Remarks}~{#1}}
\nc{\Note}[1]{{\sl Note}~{#1}}

\nc{\Conjecture}[1]{Conjecture~{#1}}
\nc{\Claim}[1]{Claim~{#1}}

\nc \Proof{{  \it Proof. }}

%%Grekiska
\nc{\xmu}{\mu}
\nc{\w}{\omega}
%%Grekisk feta
\nc{\xv}{\mbox{\boldmath$x$}}
\nc{\uv}{\mbox{\boldmath$u$}}
\nc{\xiv}{\mbox{\boldmath$\xi$}}
\nc{\bbeta}{\mbox{\boldmath$\beta$}}
\nc{\balpha}{\mbox{\boldmath$\alpha$}}
\nc{\bgamma}{\mbox{\boldmath$\gamma$}}
\nc{\bdelta}{\mbox{\boldmath$\delta$}}
\nc{\bepsilon}{\mbox{\boldmath$\epsilon$}}
%%Feta

\newcommand{\ZZ}{{\mathbb Z}}
\newcommand{\RR}{{\mathbb R}} 

\nc \Ab{{\ensuremath{\bold A}}}
\nc \ab{{\ensuremath{\bold a}}}
\nc \bb{{\ensuremath{\bold b}}}
\nc \cb{{\ensuremath{\bold c}}}
\nc \Bb{{\ensuremath{\bold B}}}
\nc \Gb{{\ensuremath{\bold G}}}
\nc \Qb{{\ensuremath{\bold Q}}}
\nc \Rb{{\ensuremath{\bold R}}} \nc \Cb{{\ensuremath{\bold C}}} 
\nc \Eb{{\ensuremath{\bold E}}}
\nc \eb{{\ensuremath{\bold e}}}
\nc \Db{{\ensuremath{\bold D}}}
\nc \Fb{{\ensuremath{\bold F}}}
\nc \ib{{\ensuremath{\bold i}}}
\nc \jb{{\ensuremath{\bold j}}}
\nc \kb{{\ensuremath{\bold k}}}
\nc \nb{{\ensuremath{\bold n}}}
\nc \rb{{\ensuremath{\bold r}}}
\nc \Pb{{\ensuremath{\bold P}}}
\nc \pb{{\ensuremath{\bold p}}}
\nc \SPb{{\ensuremath{\bold {SP}}}}
\nc \Zb{{\ensuremath{\bold Z}}} 
\nc \zb{{\ensuremath{\bold z}}} 
\nc \gb{{\ensuremath{\bold g}}} 
\nc \fb{{\ensuremath{\bold f}}} 
\nc \ub{{\ensuremath{\bold u}}} 
\nc \vb{{\ensuremath{\bold v}}} 
\nc \yb{{\ensuremath{\bold y}}} 
\nc \xb{{\ensuremath{\bold x}}} 
\nc \xib{{\ensuremath{\bold \xi}}} 
\nc \Nb{{\ensuremath{\bold N}}} 
\nc \Hb{{\ensuremath{\bold H}}} 
\nc \wb{{\ensuremath{\bold w}}} 
\nc \Wb{{\ensuremath{\bold W}}} 
\nc \syz{{\mathbf {syz}}}
\nc \bnoll{{\ensuremath{\bold 0}}} 
%%frakturer:

\nc \mf{\frak m} \nc \mh{\hat{\m}} 
\nc \nf{\frak n}
\nc \Of{\frak O}
\nc \rf{\frak r}
\nc \mufr{{\mathbf \mu}}
\nc \hf{\frak h} 
\nc \qf{\frak q} 
\nc \bfr{\frak b} 
\nc \kfr{\frak k} 
\nc \pfr{\frak p} 
\nc \af{\frak a }
\nc \cf{\frak c }
\nc \sfr{\frak s} 
\nc \ufr{\frak u} 
\nc \g{\frak g} 
\nc \gA{\g_{\Ao}} 
\nc \lfr{\frak l}
\nc \afr{\frak a}
\nc \gfh{\hat {\frak g}}
\nc \gl{\frak { gl }}
\nc \Sl{\frak {sl}}
\nc \SU{\frak {SU}}
\nc{\Homf}{\frak{Hom}}

%%operatorer
%\newcommand{\on}{\mathrm}
\newcommand{\on}{\operatorname}
\nc\hankel{\on {Hankel}}
\nc\row{\on {row\ }}
\nc\nullity{\on {nullity }}
\nc\col{\on {col\ }}
\nc\rowm{\on {Row \ }}
\nc\loc{\on {lc \ }}
\nc\nullo{\on {null\ }}
\nc\Nul{\on {Nul\ }}
\nc \Ann {\on {Ann }}
\nc \Ass {\on {Ass \ }}
\nc \Coker {\on {Coker}}
\nc \Co{\on C}
\nc \Homo{\on {Hom}}
\nc \Ker {\on {Ker}}
\nc \omod{\on {mod}}
\nc \No {\on N}
\nc \NN {\on {NN}}
\nc \NGo {\on {NG}}
\nc \Oo {\on O}
\nc \ch {\on {ch}}
\nc \rko {\on {rk}}
\nc \Sing {\on {Sing\ }}
\nc \Reg {\on {Reg}}
\nc \CoI {\on {CI}}
\nc \CoM {\on {CM}}
\nc \Gor {\on {Gor}}
\nc \Type {\on {Type}}
\nc \can {\on {can}}
\nc \Top {\on {T}}
\nc \Tr {\on {Tr}}
\nc \rel {\on {rel}}
\nc \tr {\on {tr}}
\nc \sgn {\on {sgn }}
\nc \trdeg {\on {tr.deg}}
\nc \codim {\on {codim }}
\nc \coht {\on {coht}}
\nc \divo {\on {div \ }}
\nc \coh {\on {coh}}
\nc \Clo {\on {Cl}}
\nc \embdim{\on {embdim}}
\nc \ed{\on {ed}}
\nc \embcodim{\on {embcodim  }}
\nc \qcoh {\on {qcoh}}
\nc \grad {\on {grad}\ }
\nc \grade {\on {grade}}
\nc \hto {\on {ht}}
\nc \depth {\on {depth}}
\nc \prof {\on {prof}}
\nc \reso{\on {res}}
\nc \ind{\on {ind}}
\nc \prodo{\on {prod}}
\nc \coind{\on {coind}}
\nc \Con{\on {Con}}
\nc \Crit{\on {Crit}}
\nc \Der{\on {Der}}
\nc \Char{\on {Char}}
\nc \Ch{\on {Ch}}

\nc \Ext{\on {Ext}}
\nc \Eo{\on {E}}
\nc \End{\on {End}}
\nc \ad{\on {ad}}
\nc \Ad{\on {Ad}}
\nc \gr{\on {gr}}
\nc \Fo{\on {F}}
\nc \Gr{\on {Gr}}
\nc \Go{\on {G}}
\nc \GFo{\on {GF}}
\nc \Glo{\on {Gl}}
\nc \PGlo{\on {PGl}}
\nc \Ho{\on {H}}
\nc \CMo{\on {\CM}}
\nc \SCM{\on {SCM}}
\nc \hol{\on {hol}}
\nc{\sgd}{\on{sgd}}
\nc \supp{\on {supp}}
\nc \ssupp{\on {s-supp}}
\nc \singsupp{\on {singsupp}}
\nc \msupp{\on {msupp}}
%\nc \Spec{\on {Spec \ }}
\nc \spec{\on {spec}}
\nc \spano{\on {span }}
\nc \Span{\on {Span }}
\nc \Max{\on {Max}}
\nc \Mat{\on {Mat}}
\nc \Min{\on {Min}}
%\nc \max{\on {max}}
\nc \nil{\on {nil}}
\nc \Mod{\on {Mod}}
\nc \Rad {\on {Rad}}
\nc \rad {\on {rad}}
\nc \rank {\on {rank}}
\nc \range {\on {range}}
\nc \Slo{\on {SL}}
\nc \soc {\on {soc}}
\nc \Irr {\on {Irr}}
\nc \Reo {\on {Re}}
\nc \Imo {\on {Im}}
\nc \SSo{\on {SS}}
\nc \lub{\on {lub}}
%\nc \SS{\on {SS}}
%\nc \sup{\on {sup}}
\nc \gldim{\on {gl.d.}}
\nc \length{\on {length}}
\nc \pdo{\on {p.d.}} 
\nc \fdo{\on {f.d.}} 
\nc \ido{\on {i.d.}} 
\nc \dSSo{\dot {\SSo}}
\nc \So{\on S}
%\nc \I{\on I}
\nc \Io{\on I}
\nc \Jo{\on J}
\nc \jo{\on j}
\nc \Ko{\on K}
\nc \PBW{\Ac_{PBW}}
\nc \Ro{\on R}
\nc \To{\on T}
\nc \Ao{\on A}

\nc \Do{{\on D}}
\nc \Bo{\on B}
\nc \Po{\on P}
\nc \Qo{\on Q}
\nc \Zo{\on Z}
\nc \U{\on U}
\nc \wt{\on {wt}}
\nc \Uh{\hat {\U}}
\nc \T{\on T}
\nc \Lo{\on L}
\nc{\dop}{\on d}
\nc{\eo}{\on e}
\nc{\ado}{\on{ad}}
\nc{\Tot}{\on{Tot}}
\nc{\Aut}{\on{Aut}}
\nc{\sinc}{\on {sinc}}

%%Sammansatta
%
%\nc \ro#1{\rho_{{\O}_{#1}}}
%
%\nc \E{\hat\Cal E} \nc\Ea#1{\hat\Cal E^{#1}}
%\nc \Er#1{\hat\Cal E_{#1}}
%\nc\Et#1#2{\hat\Cal E_{{#1} \to {#2}}} 
%\nc\Di#1#2{\Cal D_{{#1} \to {#2}}} 
%\nc \bt{\text{\tilde {\text{\fr b}}}}
\nc{\overrightleftarrows}[2]{\overset{#1}{\underset{#2}{\rightleftarrows}}}
%%%%%

%\nc{\boxtimes}{\fbox{$\times$}}
\nc{\CCF}{\cal{CF}}
%%\nc{\CDer}{\cal{D}er}
\nc{\CDF}{\cal{DF}}
\nc{\CHC}{\check{\cal C}}

\nc{\Cone}{\on{Cone}}
\nc{\dec}{\on{dec}}
%\nc{\Der}{\cal{D}er}
\nc{\Diff}{\on{Diff}}
%\nc{\Diff}{\cal{D}\mbox{\it iff}}
\nc{\dirlim}{\underset{\to}{\on{lim}}}
\nc{\dpar}{\partial}
\nc{\GL}{\on{GL}}
\nc{\CGr}{\cal{G}r}
%%\nc{\gr}{\on{gr}}
\nc{\pr}{\on{pr}}
\nc{\semid}{|\!\!\!\times}
\nc{\Hom}{\on{Hom}}
\nc \RHom{\on {RHom}}

\nc \Proj{\mathrm {Proj\ }}
\nc \proj{\mathrm {proj}}
\nc{\Id}{\on{Id}}
\nc{\id}{\on{id}}
\nc{\Ima}{\on{Im}}
\nc{\invtimes}{\underset{\gets}{\otimes}}
\nc{\invlim}{\underset{\gets}{\on{lim}}}
\nc{\Lie}{\on{Lie}}
%\nc{\res}{\on{res}}
\nc{\re}{\on{Re }}
\nc{\Pic}{\on{Pic }}
\nc{\LPic}{\on{LPic }}
\nc{\Sch}{\on{Sch}}
\nc{\Sh}{\on{Sh}}
\nc{\Set}{\on{Set}}
\nc{\spo}{\on{sp\  }}
\nc{\Spec}{\on{Spec}}
\nc{\mSpec}{\on{mSpec}}
\nc{\Specb}{\bold {Spec}\ }
\nc{\Projb}{\bold {Proj}}
\nc{\Specan}{\on{Specan}}
\nc{\Spo}{\on{Sp}}
\nc{\Spf}{\on{Spf}}
\nc{\sym}{\on{sym}}
\nc{\symm}{\on{symm}}
\nc{\rop}{\on{r}}
\nc{\Td}{\on{Td}}
%\nc{\Th}{\mbox{\bf T}}
\nc{\Tor}{\on{Tor}}

% categories

\nc{\Artin}{\cal{A}rtin}
\nc{\Dgcoalg}{\cal{D}gcoalg}
\nc{\Dglie}{\cal{D}glie}
\nc{\Ens}{\cal{E}ns}
\nc{\Fsch}{\cal{F}sch}
%\nc{\Gr}{\cal{G}r}
\nc{\Groupoids}{\cal{G}roupoids}
\nc{\Holie}{\cal{H}olie}
%\nc{\Mod}{\cal{M}od}
\nc{\Mor}{\cal{M}or}
%\nc{\Sch}{\cal{S}ch}

% Kalligrafiska bokst{\"a}ver
\nc{\CF}{\ensuremath{\cal{F}}}
\nc \Kc{{\ensuremath{\cal K}}}
\nc \Lc{{\ensuremath{\cal L}}}
\nc \lcc{{\mathcal l}} 
\nc \CC{{\ensuremath{\cal C}}} 
\nc \Cc{{\ensuremath {\cal C}}}
\nc \Pc{{\ensuremath{\cal P}}}
%\nc{\Dc}[1]{{\ensuremath{{\mathcal D}_{#1}}}}
\nc \Dc{\ensuremath{\mathcal D}}
\nc \Ac{{\ensuremath{\cal A}}} 
\nc \Bc{{\ensuremath{\cal B}}}
\nc \Ec{{\ensuremath{\cal E}}}
\nc \Fc{{\ensuremath{\cal F}}}
\nc \Mcc{{\ensuremath{\cal M}}} 
\nc \hM{\hat{\Mcc}} 
\nc \bM{\bar {\Mcc}} 
\nc\hbM{\hat{\bar \Mcc}}  
\nc \Nc{{\ensuremath{\cal N}}}
\nc \Hc{{\ensuremath{\cal H}}} 
\nc \Ic{{\ensuremath{\cal I}}} 
%\nc{\Oc_}[1]{{\ensuremath{{\mathcal O}{#1}}}}
\nc \Oc{\ensuremath{{\cal O}}}
\nc \qc{\ensuremath{{\Cal q}}}
\nc \Och{\hat{\cal O}} 
\nc \Sc{{\ensuremath{{\cal S}}}}
\nc \Tc{\ensuremath{{\cal T}}} 
\nc \Vc{{\ensuremath{{\cal V}}}} 
\nc{\CA}{{\ensuremath{{\cal A}}}}
\nc{\CB}{{\ensuremath{{\cal B}}}}
%\nc{\CC}{{\ensuremath{\cal C}}}
\nc{\C}{{\ensuremath{{\cal F}}}}
\nc{\Gc}{{\ensuremath{{\cal G}}}}
\nc{\CH}{\ensuremath{\mathcal H}}
\nc{\CI}{{\ensuremath{{\cal I}}}}
\nc{\CM}{{\ensuremath{{\cal M}}}}
\nc{\CN}{{\ensuremath{{\cal N}}}}
\nc{\CO}{{\ensuremath{{\cal O}}}}
%\nc{\CP}{{\ensuremath{\cal P}}}
\nc{\Rc}{{\ensuremath{{\cal R}}}}
\nc{\CT}{{\ensuremath{\mathcal T}}}
\nc{\CU}{\ensuremath{{\cal U}}}
\nc{\CV}{\ensuremath{{\cal V}}}
\nc{\CZ}{\ensuremath{{\cal Z}}}
\nc{\Homc}{\ensuremath{{\cal {Hom}}}}

% gotiska boksta{\"a}ver

\nc{\fa}{\frak{a}}
\nc{\fA}{\frak{A}}
\nc{\fg}{\frak{g}}
\nc{\fh}{\frak{h}}
\nc{\fI}{\frak{I}}
\nc{\fK}{\frak{K}}
\nc{\fm}{\frak{m}}
\nc{\fP}{\frak{P}}
\nc{\fS}{\frak{S}}
\nc{\ft}{\frak{t}}
\nc{\fX}{\frak{X}}
\nc{\fY}{\frak{Y}}

% andra bokst{\"a}ver

\nc{\bF}{\bar{F}}
\nc{\bCP}{\bar{\cal{P}}}
\nc{\bm}{\mbox{\bf{m}}}
\nc{\bT}{\mbox{\bf{T}}}
\nc{\hB}{\hat{B}}
\nc{\hC}{\hat{C}}
\nc{\hP}{\hat{P}}
\nc{\htest}{\hat P}

% diverse symboler

\nc{\nen}{\newenvironment}
\nc{\ol}{\overline}
\nc{\ul}{\underline}
\nc{\ra}{\to}
\nc{\lla}{\longleftarrow}
\nc{\lra}{\longrightarrow}
\nc{\Lra}{\Longrightarrow}
\nc{\Lla}{\Longleftarrow}
\nc{\Llra}{\Longleftrightarrow}
\nc{\hra}{\hookrightarrow}
\nc{\iso}{\overset{\sim}{\lra}}

%stilar
\nc{\dsize}{\displaystyle}
\nc{\sst}{\scriptstyle}
\nc{\tsize}{\textstyle}
\nen{exa}[1]{\label{#1}{\bf Example.\ } }{}

% environments for rem,note, ack

\nen{rem}[1]{\label{#1}{\em Remark.\ } }{}
%\nen{note}[1]{\label{#1}{\em Note.\ } }{}

\title{$\Dc$-modules with finite support are semi-simple }
\author{Rolf K\"allstr\"om}
\maketitle
\begin{abstract} Let $(R, \mf, k_R)$ be regular local $k$-algebra
  satisfying the weak Jacobian criterion, such that $k_R/k$ is an
  algebraic field extension. Let $\Dc_R$ be the ring of $k$-linear
  differential operators of $R$.  We give an explicit decomposition of
  the $\Dc_R$-module $\Dc_R/\Dc_R \mf_R^{n+1}$ as a direct sum of
  simple modules, all isomorphic to $\Dc_R/\Dc_R \mf$, where certain
  ``Pochhammer'' differential operators are used to describe
  generators of the simple components.
 \end{abstract}

\section{Introduction} 
The reason for this note is that J.-E. Bj\"ork teased me by asking
whether I knew if maximal ideals in a polynomial ring $A= k[x_1, \dots
, x_d]$ generate maximal left ideals in the Weyl algebra $\Dc_A$, also
when the ground field $k$ is not algebraically closed.  That $N=
\Dc_A/\Dc_A \mf_A$ is semisimple follows from Kashiwara's theorem that
the category of $\Dc_A$-modules with support in the point $\mf_A\in
\Spec A$ is equivalent to the category of vector space over the
residue field $k_A= A/\mf_A$, where the equivalence is $N \mapsto
N^{\mf_A}= \{n\in N \ | \ \mf \cdot n =0\}$ (see \cite [V.3.1.2,
VI.7.3]{borel:Dmod}; the argument works fine also when $k_A$ is not
algebraically closed).  In particular, $N$ will be simple if
$\dim_{k_A}N^{\mf_A}=1$. We will prove this, which surely is
well-known, but the main purpose of this note is to give explicit
semi-simple decompositions of $\Dc_R$-modules with a finite support,
where $\Dc_R$ is the ring of differential operators associated to a
rather general regular local ring $R$.

We shall work over a local regular noetherian $k$-algebra $(R, \mf,
k_R)$ of characteristic $0$, only requiring that the $R$-module of
$k$-linear derivations $T_{R/k}$ is big enough.

\begin{theorem}\label{goodring}(\cite{matsumura}*{Thms. 30.6, 30.8}) 
  Let $(R,\mf_R)$ be a regular local ring of dimension $n$ containing
  the rational numbers $\Qb$.  Let $R^*$  be a completion of $R$, $k_1$ a
  quasi-coefficient field of $R$, and $K$ be a coefficient field of
  $R^*$ such that $k_1\subset K$. The following conditions are
  equivalent:
  \begin{enumerate}
  \item There exist $\partial_1, \dots, \partial_n \in T_{R/k_1}$ and
    $f_1, \dots , f_n \in \mf_R$ such that $\det \partial_i(f_j)\not
    \in \mf_R$.
  \item If $\{x_1, \dots , x_n\}$ is a regular system of parameters
    and $\partial_{x_i}$ are the partial derivatives of $R^* = K[[x_1,
    \dots , x_n]]$, $\partial_{x_i}(x_j)= \delta_{ij}$, then
    $\partial_{x_i} \in T_{R/k_1}$.
  \item $T_{R/k_1}$ is free of rank $n$.  
  \end{enumerate}
  Furthermore, if these conditions hold, then for any $P\in \Spec R$,
  putting $A= R/P$, we have $T_{A/k_1}= T_{R/k_1}(P)/PT_{R}$, and
  $\rank T_{A/k_1} = \dim A$.  ($T_{R/k_1}(P)\subset T_{R/k_1}$
  denotes the submodule of derivations $\partial$ such that $\partial
  (P)\subset P$.)
\end{theorem}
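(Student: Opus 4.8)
The plan is to realise $T_{R/k_1}$ as a concrete $R$-submodule of $(R^*)^n$ and to read the three conditions off its position. Fix a regular system of parameters $x_1,\dots,x_n$, write $R^*=K[[x_1,\dots,x_n]]$, and let $\partial_{x_i}$ be the coordinate derivations, so that the continuous $K$-linear derivations of $R^*$ are exactly $\bigoplus_{i=1}^n R^*\partial_{x_i}$. First I would check that every $\partial\in T_{R/k_1}$ extends uniquely to a continuous $k_1$-derivation $\hat\partial$ of $R^*$ (continuity because $\partial(\mf_R^{m})\subseteq\mf_R^{m-1}$). Since $k_1$ is a quasi-coefficient field, $K\cong k_R$ is separable algebraic over $k_1$, whence $\Der_{k_1}(K,R^*)=0$ and $\hat\partial|_K=0$; thus $\hat\partial$ is $K$-linear and $\hat\partial=\sum_i\partial(x_i)\partial_{x_i}$. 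Consequently $\iota\colon\partial\mapsto(\partial(x_1),\dots,\partial(x_n))$ is an injective $R$-linear map identifying $T_{R/k_1}$ with
\[
N=\Big\{(a_1,\dots,a_n)\in R^n:\big({\textstyle\sum_i a_i\partial_{x_i}}\big)(R)\subseteq R\Big\}\subseteq R^n,
\]
the standard basis vector $e_i$ corresponding to $\partial_{x_i}$.

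Next I would dispatch the implications among (1)--(3) that do not need regularity. For $(2)\Rightarrow(3)$: if each $\partial_{x_i}\in T_{R/k_1}$ then every $e_i\in N$, so $N=R^n$ and $T_{R/k_1}=\bigoplus_i R\partial_{x_i}$ is free of rank $n$. For $(2)\Rightarrow(1)$ one takes $f_j=x_j$ and $\partial_i=\partial_{x_i}$, giving $\det(\partial_{x_i}(x_j))=1\notin\mf_R$. For $(1)\Rightarrow(2)$: writing $\hat\partial_i=\sum_l a_{il}\partial_{x_l}$ with $a_{il}=\partial_i(x_l)\in R$ yields the matrix identity $\big(\partial_i(f_j)\big)=A\,B$ over $R^*$, where $A=(a_{il})\in\Mat_n(R)$ and $B=(\partial_{x_l}(f_j))\in\Mat_n(R^*)$. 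As $\det(\partial_i(f_j))$ is a unit of $R$, it is a unit of $R^*$, so $\det A$ is a unit of $R^*$; but $\det A\in R$ and $\mf_R R^*\cap R=\mf_R$, so $\det A\notin\mf_R$ and $A\in\GL_n(R)$. Inverting gives $\partial_{x_l}=\sum_i(A^{-1})_{li}\hat\partial_i$ with coefficients in $R$, and since each $\hat\partial_i$ preserves $R$ so does $\partial_{x_l}$, which is (2).

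The heart of the matter is $(3)\Rightarrow(2)$, and here I would use that $N$ is saturated in $R^n$. Suppose $r\in R\setminus\{0\}$ and $ra\in N$ with $a=(a_i)\in R^n$. For $g\in R$ set $y=\sum_i a_i\partial_{x_i}(g)\in R^*$; then $ry\in R$, so $ry\in rR^*\cap R=rR$ by faithful flatness of the completion, and cancelling $r$ in the domain $R^*$ gives $y\in R$. Hence $a\in N$, so $R^n/N$ is torsion-free. Under (3) the module $N\cong T_{R/k_1}$ is free of rank $n$, so $R^n/N$ has rank $0$; being torsion-free it vanishes, i.e. $N=R^n$, which is (2). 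This closes the cycle $(1)\Leftrightarrow(2)\Leftrightarrow(3)$. The step I expect to be most delicate is the well-definedness of $\iota$: it relies on the Cohen structure theory built into the hypotheses (so that $R^*=K[[x]]$ with $k_1\subseteq K$) together with the separable-algebraicity of $K/k_1$, without which $\hat\partial$ would acquire components along $\Der_{k_1}(K)$ and the identification with $N$ would fail.

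For the closing assertion I would assume the equivalent conditions, so that $\Omega_{R/k_1}\cong R^n$ with basis $dx_i$ dual to $\partial_{x_i}$ and $T_{R/k_1}=\bigoplus_i R\partial_{x_i}$. Given $P\in\Spec R$ and $A=R/P$, the conormal sequence $P/P^2\xrightarrow{\bar d}\Omega_{R/k_1}\otimes_R A\to\Omega_{A/k_1}\to0$ reads, after the identification $\Omega_{R/k_1}\otimes_R A\cong A^n$, as $\bar d(g)=(\partial_{x_1}g,\dots,\partial_{x_n}g)\bmod P$; applying $\Hom_A(-,A)$ identifies $T_{A/k_1}$ with $\{(a_i)\in A^n:\sum_i a_i\partial_{x_i}(g)\in P\text{ for all }g\in P\}$. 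Lifting representatives to $R$, a derivation $\sum_i\tilde a_i\partial_{x_i}$ meets this condition exactly when it preserves $P$, and maps to $0$ exactly when all $\tilde a_i\in P$; so $\partial\mapsto\bar\partial$ gives the stated isomorphism $T_{A/k_1}\cong T_{R/k_1}(P)/P\,T_{R/k_1}$. Finally, as $R\supseteq\Qb$ the extension $\on{Frac}(A)/k_1$ is separable, and since $k_R/k_1$ is algebraic one has $\trdeg_{k_1}\on{Frac}(A)=\dim A$; hence at the generic point the Jacobian $\bar d$ has rank $\hto P=n-\dim A$, giving $\rank T_{A/k_1}=n-\hto P=\dim A$. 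The subtle point here is the generic rank of $\bar d$, i.e. the generic smoothness of $A$ over $k_1$, which is exactly where characteristic zero is essential.
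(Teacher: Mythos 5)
The paper does not actually prove this theorem: it is quoted from Matsumura (Thms.~30.6, 30.8), so there is no internal proof to compare against. Your argument for the equivalence of (1)--(3) is correct and is essentially the standard one: extend a derivation continuously to $R^*=K[[x_1,\dots,x_n]]$, use that $K/k_1$ is separably algebraic (so $\Der_{k_1}(K,-)=0$) to identify $T_{R/k_1}$ with the submodule $N\subseteq R^n$, and then do linear algebra over $R\subseteq R^*$; the saturation argument via $rR^*\cap R=rR$ for $(3)\Rightarrow(2)$ is exactly the right point. (One presentational gap: condition (2) quantifies over \emph{all} regular systems of parameters while you fix one at the outset; the passage to an arbitrary system follows from your $(1)\Rightarrow(2)$ computation applied to the change-of-coordinates matrix $(\partial_{x_i}(y_l))$, but this should be said.)

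The ``furthermore'' clause, however, has genuine gaps, and they matter precisely for the rings this paper cares about (formal and convergent power series rings). First, $\Omega_{R/k_1}$ is \emph{not} free of rank $n$ in general: for $R=K[[x_1,\dots,x_n]]$ the K\"ahler differential module is not even finitely generated, since $d$ is not continuous. What you actually need is the dual statement $\Der_{k_1}(R,A)\cong A^n$ via $D\mapsto (D(x_1),\dots,D(x_n))$, which is true and follows from the same completion argument as in your first paragraph; with that substitution the identification $T_{A/k_1}\cong T_{R/k_1}(P)/PT_{R/k_1}$ does go through. Second, and more seriously, the rank computation is unjustified: the equality $\trdeg_{k_1}\on{Frac}(A)=\dim A$ is false outside the essentially-finite-type case (already for $A=R=k[[x]]$ the transcendence degree is uncountable while $\dim A=1$), and ``generic smoothness'' in the finite-type sense is not available here. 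The statement actually needed is that the Jacobian matrix $(\partial_{x_i}(g_j)\bmod P)$ of a generating set of $P$ has rank exactly $\hto P$ over $\kappa(P)$ --- equivalently, that condition (1) descends to $R_P$. This is the real content of Matsumura's Theorems 30.4--30.5 and rests on the characteristic-zero symbolic-power argument (if $g\in PR_P\setminus P^2R_P$ then $\partial(g)\notin PR_P$ for some $\partial\in T_{R/k_1}$). As written, your proof assumes the hard half of the ``furthermore'' statement.
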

If the equivalent conditions in \Theorem{\ref{goodring}} hold, then we
say that $(R, \mf, k_R)$ satisfies the weak Jacobian condition
$(WJ)_{k_1}$. Note that if $k_R/k$ is algebraic, then we can replace
$k_1$ by $k$ and write $(WJ)_k$.  In this paper $(R, \mf, k_R)$
denotes a $k$-algebra of characteristic $0$ satisfying $(WJ)_{k}$,
where the field extension $k_R/k$ is algebraic.

For example, $R$ could be the localisation at a maximal ideal of a
regular ring of finite type over $k$, a formal power series ring over
$k$, or a ring of convergent power series when $k$ is either the field
of real or complex numbers.

Recall that the ring of ($k$-linear) differential operators $\Dc_R 
\subset \End_k (R)$ of $R$ is defined inductively as $\Dc_R= \cup
_{m\geq 0} \Dc_R^m $, $\Dc^0= \End_R(R)= R$, $\Dc_R^{m+1}= \{P
\in \End_k (R) \ \vert \ [P, R]\subset \Dc_R^m\}$, where $[P, R]= PR -
R P \subset \End_k (R)$.  It is easy to see that $T_R\subset \Dc^1_R
\subset \Dc_R$, and conversely, if $P\in \Dc^1_R$, then $P - P(1) \in
T_R$; hence
\begin{displaymath}
  \Dc^1_R = R+ T_R .
\end{displaymath}

The following companion to \Theorem{\ref{goodring}} should be well
known; see \cite{kallstrom-tadesse:liehilbert}.

\begin{proposition}\label{diffop-gen-deg1}
  Let $R/k$ be a regular local $k$-algebra satisfying $(WJ)_k$ and
  such that $k_R/k$ is algebraic. Then $\Dc^1_R$ generates the algebra
  $\Dc_R$.
\end{proposition}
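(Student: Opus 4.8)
The plan is to show that the $k$-subalgebra $\Dc'\subset\Dc_R$ generated by $\Dc^1_R=R+T_R$ is all of $\Dc_R$, by inducting along the order filtration $\Dc_R=\bigcup_m\Dc^m_R$. By \Theorem{\ref{goodring}} the hypothesis $(WJ)_k$ supplies a regular system of parameters $x_1,\dots,x_n$ and a free basis $\partial_1,\dots,\partial_n$ (with $\partial_i:=\partial_{x_i}$) of $T_{R/k}$ satisfying $\partial_i(x_j)=\delta_{ij}$, the reduction from $k_1$ to $k$ being legitimate because $k_R/k$ is algebraic. Since $R\subset\Dc^1_R$ and each $\partial_i\in T_R\subset\Dc^1_R$, the algebra $\Dc'$ contains every operator $a\,\partial^\gamma=a\,\partial_1^{\gamma_1}\cdots\partial_n^{\gamma_n}$ with $a\in R$ and $\gamma\in\mathbb{N}^n$. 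It therefore suffices to prove, by induction on $m$, that $\Dc^m_R\subset\Dc'$; the cases $m\le1$ being immediate.

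For the inductive step I would fix $P\in\Dc^m_R$. Writing $\ad_a(\cdot)=[a,\cdot]$, the operators $\ad_{x_1},\dots,\ad_{x_n}$ pairwise commute and lower order by one, so for $|\gamma|=m$ the iterated commutator $\ad_x^\gamma(P):=\ad_{x_1}^{\gamma_1}\cdots\ad_{x_n}^{\gamma_n}(P)$ lands in $\Dc^0_R=R$. From $\ad_{x_j}(\partial_i)=-\delta_{ij}$ and the Leibniz rule for $\ad$ one computes $\ad_x^\beta(\partial^\gamma)=(-1)^m\gamma!\,\delta_{\beta\gamma}$ whenever $|\beta|=|\gamma|=m$. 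Hence, setting
\[
a_\gamma:=\frac{(-1)^m}{\gamma!}\,\ad_x^\gamma(P)\in R,\qquad Q:=\sum_{|\gamma|=m}a_\gamma\,\partial^\gamma\in\Dc',
\]
and using that $\ad_{x_j}$ kills the coefficients $a_\gamma\in R$, one obtains $\ad_x^\beta(Q)=\ad_x^\beta(P)$ for every $|\beta|=m$, i.e. $\ad_x^\beta(P-Q)=0$ for all such $\beta$.

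The heart of the argument is then the reduction lemma: if $P'\in\Dc^m_R$ satisfies $\ad_x^\gamma(P')=0$ for all $|\gamma|=m$, then $P'\in\Dc^{m-1}_R$. Here I would recall that $P'\in\Dc^{m-1}_R$ precisely when the symmetric form $D(r_1,\dots,r_m):=\ad_{r_1}\cdots\ad_{r_m}(P')\in R$ vanishes identically. Peeling off one argument, $E:=\ad_{r_2}\cdots\ad_{r_m}(P')$ lies in $\Dc^1_R=R+T_R$, and for $E=b+\delta$ with $b\in R$, $\delta\in T_R$ one has $\ad_{r_1}(E)=-\delta(r_1)$; since $T_{R/k}$ is free on $\partial_1,\dots,\partial_n$ with $\partial_i(x_j)=\delta_{ij}$, every such $\delta$ satisfies $\delta=\sum_i\delta(x_i)\partial_i$, whence $D(r_1,\dots,r_m)=\sum_i\partial_i(r_1)\,D(x_i,r_2,\dots,r_m)$. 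Iterating this reconstruction in each of the $m$ slots (using the symmetry of $D$) expresses $D(r_1,\dots,r_m)$ as an $R$-combination of the values $D(x_{i_1},\dots,x_{i_m})=\ad_x^\gamma(P')$, all of which vanish by hypothesis. Thus $D\equiv0$ and $P'\in\Dc^{m-1}_R$.

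Applying this with $P'=P-Q$ gives $P-Q\in\Dc^{m-1}_R\subset\Dc'$ by the induction hypothesis, hence $P\in\Dc'$, completing the induction. The crux — and the only place the full force of $(WJ)_k$ enters — is the one-line reconstruction $\delta=\sum_i\delta(x_i)\partial_i$, i.e. the fact that the principal symbol of an operator is detected by its commutators with the coordinates $x_i$; this is exactly the freeness of $T_{R/k}$ on a basis dual to a regular system of parameters, and I expect verifying the bookkeeping of this reconstruction (symmetry, commuting $\ad_{x_i}$, correct handling of lower-order terms) to be the main technical point. As an alternative I could instead pass to the completion $R^*=K[[x_1,\dots,x_n]]$, invoke the classical generation of $\Dc_{R^*/K}$ in order one, and recover $a_\gamma\in R$ from $\ad_x^\gamma(P)\in R$; but the direct argument above sidesteps any appeal to $R^*$.
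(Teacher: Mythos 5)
Your argument is correct, and it is worth noting that the paper itself gives no proof of Proposition~\ref{diffop-gen-deg1} at all: it merely remarks that the statement ``should be well known'' and defers to the reference \cite{kallstrom-tadesse:liehilbert}. So there is no in-paper proof to compare against; what you have written is a complete, self-contained substitute. Your route is the classical one: using the dual system $\partial_i(x_j)=\delta_{ij}$ supplied by $(WJ)_k$, you strip the top-order part of $P\in\Dc^m_R$ by forming $a_\gamma=\frac{(-1)^m}{\gamma!}\ad_x^\gamma(P)$ (legitimate since $\Qb\subset R$), and then reduce to the key lemma that an operator of order $\le m$ whose $m$-fold commutators with the coordinates all vanish has order $\le m-1$. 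Your proof of that lemma is right: the multilinear form $D(r_1,\dots,r_m)$ is symmetric because the $\ad_{r_i}$ commute, peeling one slot lands you in $\Dc^1_R=R+T_R$ where only the derivation part survives the last commutator, and the identity $\delta=\sum_i\delta(x_i)\partial_i$ lets you reconstruct $D$ from its values on coordinate tuples. The only point you pass over lightly is that this last identity requires the $\partial_{x_i}$ to actually form a basis of $T_{R/k}$ (not merely that $T_{R/k}$ is free of rank $n$ and contains them); this is indeed part of the content of Matsumura's Theorems 30.6/30.8 as packaged in Theorem~\ref{goodring}, and the paper itself uses it freely (e.g.\ in establishing Fact (1)), so the appeal is legitimate --- but a one-line justification (a $k$-derivation killing all $x_j$ kills $K$ since $k_R/k$ is algebraic of characteristic $0$, hence extends to zero on the completion) would make the proof airtight.
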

Select $x_i$ and $\partial_{x_i}$ as in
\Theorem{\ref{goodring}}. Given a multi-index $\alpha = (\alpha_1,
\dots , \alpha_n)$, $n= \dim R$, we put $X^\alpha =
x_1^{\alpha_1}x_2^{\alpha_2} \cdots x_n^{\alpha_n} \in R$,
$\partial^\alpha
= \partial_{x_1}^{\alpha_1} \partial_{x_2}^{\alpha_2}\cdots \partial_{x_n}^{\alpha_n}\in
\Dc_R$, $|\alpha| = \sum\alpha_i$, and $\alpha ! = \alpha_1!  \cdots
\alpha_n!$.  

We recall some important well-known facts for the algebra $\Dc_R$, which we later
on will refer to as (Facts):

\begin{enumerate}
\item the $R$-module $\Dc_R$ is free with basis
  $\{\partial^\alpha\}_{\alpha\in \Nb^d} $, where $\Dc_R$ is either
  regarded as left or right module.
\item $R$ is a simple $\Dc_R$-module.
\item $\Dc_R$ is a simple ring.
\end{enumerate}

\begin{proof}
  (1): That the $\partial^\alpha$ generate $\Dc_R$ both as left or
  right $R$-algebra follows from
  \Proposition{\ref{diffop-gen-deg1}}. First consider $\Dc_R$ as left
  $R$-module. Assume that $P=\sum_{\alpha \in \Omega}
  a_\alpha \partial^\alpha =0 $, where $\Omega$ is a finite set of
  multi-indices. If one of the indices has minimal $|\alpha |$ in the
  set $\Omega$, then $P(X^\alpha) = a_{\alpha}\alpha !  =0$. This
  implies that $a_\alpha =0$ for all $\alpha$. Now take the right
  module structure, and assume $\sum_{\alpha \in
    \Omega} \partial^\alpha a_\alpha =0$. Then $\sum_{\alpha \in
    \Omega} (a_\alpha \partial^\alpha + [\partial^\alpha, a_\alpha])
  =0$, where $[\partial^\alpha, a_\alpha] \in \Dc_R^{|\alpha|-1}$ and
  $a_\alpha \partial^\alpha \in \Dc_R^{|\alpha|}$. Since the
  $\partial^\alpha$ are free generators as left module, it follows
  that if $\alpha \in \Omega$ has maximal $|\alpha|$, then $a_\alpha
  =0$. This implies that all $a_{\alpha}=0$.  (2): Let $I\subset R$ be
  a non-zero $\Dc_R$-module. If $I\neq R$ there exists a non-zero
  element $f\in I\cap \mf^l$ of smallest $l \geq 1$. But then there
  exists a derivation $\partial$ such that $\partial (f)\in
  \mf^{l-1}$, $\partial (f)\neq 0 $, which gives a contradiction.
  (3): If $P\in \Dc^n_R $ belongs to a 2-sided ideal $J$, then
  $P_r=[r,P]\in J\cap \Dc^{n-1}_R $ for all $r\in R$. Unless $P
  \not\in \Dc_R^0= R$ there exists an element $r$ such that $P_r \neq
  0$.  Iterating, it follows that $J\cap R\neq 0$.  By (2) $R \subset
  J$; hence $J= \Dc_R$.
\end{proof}

\section{$\Dc$-modules with finite support}
Let $\Dc_X = \Dc_{X/k} $ denote the sheaf of differential operators on
a scheme $X/k$; we refer to \cite{EGA4:4} for the basic
definitions. Instead of schemes we could in a similar way consider
sheaves on complex or real analytic manifolds (or even ringed spaces
where the local rings are regular and satisfy $(WJ)_k$ at all closed
points), but the reader will have little problems in transcribing the
theorem below to such a situation.

The theorem below can be regarded as a version of Kashiwara's
embedding theorem: 
\begin{theorem}\label{semisimplefinitesupport}
  Let $X/k$ be a scheme of characteristic $0$ such that the local ring
  at all closed points are regular and satisfies $(WJ)_k$, and that
  all closed points are rational over $k$. Let $M$ be a coherent
  $\Dc_{X/k}$ -module whose support $\supp M \subset X$ is a finite
  set of closed points.  Let $n_x$ be the length of the maximal
  submodule of $M$ with support at the point $x$, and let $\mf_x$ be
  the sheaf of ideals of $x$.  Then $M$ is a semi-simple module of the
  form
  \begin{displaymath}
    M =    \bigoplus_{x\in \supp M} \bigoplus ^{n_x} \frac {\Dc_X}{\Dc_X \mf_x },
  \end{displaymath}
and $n_x= \dim_{k_x} M^{\mf_x}$. 
\end{theorem}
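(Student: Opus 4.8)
The plan is to reduce everything to the local, purely algebraic statement and then apply the Kashiwara-type equivalence already invoked in the Introduction. Since $\supp M$ is a finite set of closed points, the sheaf $M$ decomposes as $M = \bigoplus_{x \in \supp M} M_x$, where $M_x$ is the summand supported at the single point $x$ (supports at distinct closed points are disjoint, and a coherent $\Dc_X$-module with finite support is a skyscraper-type direct sum over its support). So it suffices to treat one point at a time, and from now on I fix a closed point $x$, write $R$ for the local ring $\Oc_{X,x}$ (regular, satisfying $(WJ)_k$, with residue field $k$ by the rationality hypothesis), $\mf = \mf_x$, and work with the $\Dc_R$-module $N = M_x$, which has support equal to $\{\mf\}$.

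First I would establish that $N$ is annihilated by some power of $\mf$: because $N$ is coherent and supported at $\mf$, there is an integer $m$ with $\mf^{m} N = 0$, so $N$ is a module over $\Dc_R/\Dc_R \mf^{m}$ in a suitable sense. The essential input is the equivalence of categories between $\Dc_R$-modules supported at $\{\mf\}$ and finite-dimensional $k$-vector spaces, sending $N \mapsto N^{\mf} = \{n \in N \mid \mf \cdot n = 0\}$, with quasi-inverse $V \mapsto \Dc_R \otimes_R V$ (viewing $V$ as an $R/\mf = k$-module). This is exactly the Kashiwara equivalence cited from Borel in the Introduction, and the remark there notes the argument goes through without $k$ algebraically closed; I would invoke it here. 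Under this equivalence the simple object on the $\Dc_R$ side is $\Dc_R/\Dc_R\mf$, which corresponds to the one-dimensional space $k$, and I would verify from (Facts)(1)--(2) that $\Dc_R/\Dc_R\mf$ is indeed simple with $(\Dc_R/\Dc_R\mf)^{\mf}$ one-dimensional over $k$ (the class of $1$). The harder part of this step is checking the equivalence carefully: that $N^{\mf}$ is finite-dimensional, that $N$ is recovered as $\Dc_R \otimes_k N^{\mf}$, and that the functor is exact and faithful; but all of this is contained in the cited Kashiwara statement.

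Granting the equivalence, semisimplicity is immediate and the decomposition follows formally. The category of finite-dimensional $k$-vector spaces is semisimple with a unique simple object $k$, so every object is a direct sum of copies of $k$; transporting back through the equivalence, $N$ is a direct sum of copies of the simple module $\Dc_R/\Dc_R\mf$. The number of copies equals $\dim_k N^{\mf}$, since the equivalence is additive and sends $k$ to $\Dc_R/\Dc_R\mf$. This gives
\begin{displaymath}
  N \;\cong\; \bigoplus^{n_x} \frac{\Dc_R}{\Dc_R\mf}, \qquad n_x = \dim_{k} N^{\mf}.
\end{displaymath}
Finally I would reconcile the two descriptions of $n_x$: the length of the maximal submodule of $M$ supported at $x$ (which is $N$ itself) equals the number of simple composition factors, and since $\Dc_R/\Dc_R\mf$ is the unique simple, this length is exactly the multiplicity $n_x = \dim_k N^{\mf}$. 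Reassembling the local summands over all $x \in \supp M$ yields the global formula in the statement.

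I expect the main obstacle to be purely local and conceptual rather than computational: verifying the Kashiwara equivalence in this generality, where $k$ need not be algebraically closed and $R$ is an abstract regular local ring satisfying $(WJ)_k$ rather than a polynomial or power-series ring. The $(WJ)_k$ hypothesis is precisely what guarantees, via \Theorem{\ref{goodring}} and \Proposition{\ref{diffop-gen-deg1}}, that $\Dc_R$ has the clean structure (free $R$-module on $\{\partial^\alpha\}$, generated in degree one) needed to run the argument that $N^{\mf}$ generates $N$ and that $\dim_k N^{\mf} < \infty$. In a self-contained treatment one would likely want to prove the equivalence directly using the Pochhammer operators advertised in the abstract to exhibit explicit generators of the simple summands, rather than merely citing Borel; that explicit construction, giving a concrete splitting of $\Dc_R/\Dc_R\mf^{n+1}$, is presumably where the real work of the paper lies.
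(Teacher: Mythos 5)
Your overall strategy --- reduce to one closed point and invoke the Kashiwara-type equivalence $N \mapsto N^{\mf}$ --- is the ``well-known'' route the introduction alludes to, but it is not the paper's proof, and as written it has a genuine gap at its centre. The equivalence of categories is only \emph{cited} (Borel V.3.1.2, VI.7.3), and that reference treats algebraic $\Dc$-modules on smooth varieties over an algebraically closed field; it does not cover a scheme whose local rings are merely regular and satisfy $(WJ)_k$ (e.g.\ formal or convergent power series rings), which is exactly the generality of the statement. Everything you list as ``contained in the cited Kashiwara statement'' --- that $N^{\mf}$ is finite-dimensional and nonzero, that $N$ is recovered from it, exactness and faithfulness of the functor --- is precisely what must be proved in this setting, and you defer all of it; your closing paragraph in effect concedes that the real work lies elsewhere. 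There is also a false intermediate claim: a coherent $\Dc_X$-module supported at a closed point is \emph{not} annihilated by a power of $\mf$. Already for $N=\Dc_R/\Dc_R\mf$ one has $x_1^m\partial_{x_1}^m \equiv (-1)^m m! \not\equiv 0 \pmod{\Dc_R\mf}$, so $\mf^m N\neq 0$ for every $m$. What is true, and what the paper uses, is that a finite generating set lies in $M^{\mf^{n+1}}$, so that $M$ is a quotient of a finite direct sum of modules $\Dc_R/\Dc_R\mf^{n+1}$.

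The paper closes exactly this gap without ever setting up the equivalence of categories. Lemma~\ref{simplelemma} shows directly, from the right-$R$-basis $\{\partial^\alpha\}$ of $\Dc_R$, that $\Dc_R/\Dc_R\mf$ is simple with one-dimensional $\mf$-invariant space, and that any module generated by its $\mf$-invariants decomposes as a direct sum of copies of it indexed by a basis of those invariants. Filtering $M$ by the submodules generated by $\mf^iM^0$ shows $M$ is an iterated extension of such modules; since $M$ is moreover a quotient of a sum of modules $\Dc_R/\Dc_R\mf^{n+1}$, semisimplicity of $M$ reduces to semisimplicity of $\Dc_R/\Dc_R\mf^{n+1}$, which is then established by the explicit Pochhammer-operator decomposition of Theorem~\ref{decompDmod}(2) (injectivity via an Euler-operator weight argument, surjectivity by a length count). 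The identity $n_x=\dim_{k_x}M^{\mf_x}$ then falls out of Lemma~\ref{simplelemma}. To salvage your categorical route you would have to prove the equivalence in this generality --- which amounts to redoing the paper's lemmas --- or else restrict the theorem to the varieties actually covered by the reference.
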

We remark that by (Fact 1) $\Dc_R/\Dc_R \mf = k_R[\partial_{x_1},
\dots , \partial_{x_d}]$, where the action of $k_R[x_1, \dots , x_d]$
on the right side is determined by $X^\alpha \partial^\beta = - \frac
{\beta !}{\alpha !} \partial^{\beta-\alpha}$, when $\beta_i \geq
\alpha_i$, $i=1, \dots , n$, and otherwise $X^\alpha \partial^\beta
=0$.  

Our goal is to give a concrete decomposition when we are given
a presentation in terms of cyclic modules.  First we have the following
well-known lemma:

\begin{lemma}(\cite{stafford:modulestructure})\label{cyclic} Let $\Dc$
  be a simple ring (it has no non-trivial 2-sided ideals) and $M$ be a
  $\Dc$-module of finite length. Assume that for any element $m$ in $
  M$ there exists a non-zero element $P$ in $ \Dc$ such that $Pm=0$
  ($M$ is a torsion module).  Then $M$ is cyclic.
\end{lemma}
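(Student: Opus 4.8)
The plan is to induct on the length of $M$. The base cases are trivial: the zero module is cyclic, and a simple module is generated by any nonzero element. For the inductive step I would choose a simple submodule $N \subseteq M$ (one exists because $M$ has finite length), giving a short exact sequence
\[ 0 \to N \to M \to M/N \to 0 . \]
The quotient $M/N$ is again a torsion module of strictly smaller length, so by the induction hypothesis it is cyclic; I pick a generator, lift it to some $m \in M$, and set $L = \Dc m$, so that $L + N = M$. Since $N$ is simple, $N \cap L$ is either $N$ or $0$. In the first case $N \subseteq L$, whence $L = M$ and $m$ already generates; so I may assume $M = L \oplus N$.

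The strategy is then to \emph{correct} the generator: I look for $n \in N$ such that $m + n$ generates all of $M$. For every $P \in \Dc$ the $L$-component of $P(m+n)$ is $Pm$, so the image of $\Dc(m+n)$ under the projection onto $L$ is all of $L$, giving $\Dc(m+n) + N = M$ for every choice of $n$. By simplicity of $N$ it therefore suffices to find a single $n$ with $\Dc(m+n) \cap N \neq 0$, for then $N \subseteq \Dc(m+n)$ and hence $\Dc(m+n) = M$. Using $L \cap N = 0$, the condition $\Dc(m+n) \cap N = 0$ unwinds to the statement that $\Ann(m)\cdot n = 0$; so if no good $n$ existed we would have $\Ann(m)\cdot N = 0$.

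The decisive step --- and the only place the hypotheses on $\Dc$ and $M$ enter --- is to rule this out. The annihilator $\Ann(N) = \{\, P \in \Dc \mid P N = 0 \,\}$ of the module $N$ is a two-sided ideal, and it is proper since $N \neq 0$; because $\Dc$ is simple it must be zero. The failure of all corrections would give $\Ann(m) \subseteq \Ann(N) = 0$, i.e. $\Ann(m) = 0$. But $M$ is a torsion module, so $m$ has a nonzero annihilator --- a contradiction. Hence a suitable $n$ exists and $M = \Dc(m+n)$ is cyclic. I expect the main obstacle to be recognizing that the obstruction to correcting $m$ is governed by the \emph{two-sided} ideal $\Ann(N)$ rather than merely the left ideal $\Ann(m)$; once simplicity collapses $\Ann(N)$ to zero, the torsion hypothesis closes the argument at once.
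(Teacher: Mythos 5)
Your proof is correct and follows essentially the same strategy as the paper's: induction on the length, passage to a simple submodule, and correction of a lifted generator of the quotient by an element of that submodule, with the simplicity of $\Dc$ entering through a two-sided annihilator ideal. The only (cosmetic) difference is that the paper deduces from $\Ann_\Dc(m)\neq 0$ that the two-sided ideal $\Ann_\Dc(m)\Dc$ contains $1$ and so produces the correcting element explicitly, whereas you argue by contradiction via $\Ann_\Dc(N)=0$.
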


We remark that artinian $\Dc$-modules are torsion in the above sense
if the ring $\Dc$ is not artinian. For example, $\Dc_R$ is not
artinian as soon as $\dim R \geq 1$, so in particular any
$\Dc_R$-module of finite length is cyclic.

If now $M$ is a $\Dc_R$-module of finite type with support at the
maximal ideal $\mf$, then $\dim_{k_R} M^\mf < \infty$, and any set of
generators of $M$ will belong to $M^{\mf^n}$ for sufficiently high
$n$. Therefore $M$ cannot have an infinite composition series,
i.e. $M$ is of finite length, and since any element in $M$ is killed
by $\mf^n$ for sufficiently high $n$, it is clearly a torsion module
(which we thus can see without using the fact that $\Dc_R$ is
non-artinian); hence $M$ is cyclic by \Lemma{\ref{cyclic}}. If $m$ is
a cyclic generator we have a surjective homomorphism $\Dc_R
/\Dc_R\mf^{n+1}\to \Dc_R m = M$, so that after iteration we have a
finite resolution
\begin{equation}\label{equation}
0 \to \frac{\Dc_R}{\Dc_R \mf^{n_r }} \to \cdots \to
\frac{\Dc_R}{\Dc_R \mf^{n_i} }\to \cdots \to \frac{\Dc_R}{\Dc_R \mf^{n+1}}\to M \to 0
.
\end{equation}

\begin{pfof}{\Lemma{\ref{cyclic}}}
  We prove this by induction over the length of a $\Dc$-module $M$. If
  $l(M)=1$, $M$ is simple so any non-zero vector is a cyclic
  generator. Now assume $l(M)\geq 2$ and that the assertion holds for
  all modules of length $<l(M)$. If $L\subset M$ is a non-zero simple
  submodule, we have the exact sequence
\begin{displaymath}
  0\to L \to M \to M/L \to 0
\end{displaymath}
where $l(M/L) < l(M)$. By assumption there exists an element $m$ in
$M$ that maps to a cyclic generator in $M/L$. Choose a non-zero vector
$m_0 \in L$.  Since $M$ is a torsion module, $\Ann_\Dc (m) \neq 0$,
and since $\Dc$ is simple, the 2-sided ideal $ \Ann_\Dc (m)\Dc$
contains the identity $1$; hence there exists $Q\in \Ann_{\Dc} (m)$
and $P\in \Dc$ such that $QP m_0 \neq 0$. Putting $m_1 = m + Pm_0$, we
have $Qm_1 = QPm_0 \in L$, and since $L$ is simple, both $Pm_0$ and $
m_0$ belong to $ \Dc m_1$; hence also $m\in \Dc m_1$.  By assumption
any element $m'$ in $ M$ can be written $m'= P_0m_0 +P_1m $; since $m,
m_0 \in \Dc m_1$ this shows that $m_1$ is a cyclic generator of $M$.
\end{pfof}

Recall  the Pochhammer symbol:
\begin{displaymath}
(a)_n = a(a+1)\cdots (a+n-1),
\end{displaymath}
and we also put $(a)_0 =1$. In the theorem below we use the notation
in \Theorem{\ref{goodring}}.
\begin{theorem}\label{decompDmod}
  Let $(R,\mf, k_R) $ be an allowed regular local $k$-algebra $R$ of
  dimension $d$, such that the residue field $k_R= R/\mf$ is algebraic
  over $k$, and let $\Dc_R$ be the ring of differential operators of
  $R$.  Define the derivations $\partial_{x_i}$ by
  $\partial_{x_i}(x_j) = \delta_{ij}$ and the ``Pochhammer''
  differential operators
  \begin{displaymath}
    Q_{n,d}(x_1, \dots , x_d) = \prod_{i=1}^d (1+ \partial_{x_i}x_i)_n
    \in \Dc_R.
  \end{displaymath}
  \begin{enumerate}
  \item $\Dc_R/\Dc_R \mf$ is a simple $\Dc_R$-module and $\Dc_R/\Dc_R \mf^{n+1}$
    is a semi-simple $\Dc_R$-module for each positive integer $n$.
\item   There is an isomorphism of $\Dc_R$-modules
\begin{eqnarray*}
  \psi:   \bigoplus_{j=0}^{n} \bigoplus_{|\alpha|=j} \frac {\Dc_R}{\Dc_R \mf} &\to&
  \frac {\Dc_R}{\Dc_R \mf^{n+1}}, \\
  (P_{\alpha,j}\omod \Dc_R \mf)& \mapsto & P_{\alpha,j } Q_{n-j,d}(x_1, \dots , x_d) X^{\alpha}
  \omod \Dc_R \mf^{n+1}
\end{eqnarray*}
  \end{enumerate}
\end{theorem}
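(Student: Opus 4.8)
The plan is to prove both parts by exhibiting $\psi$ explicitly and checking it is a well-defined isomorphism of $\Dc_R$-modules; the semisimplicity in part (1) then falls out of part (2) together with the simplicity of $\Dc_R/\Dc_R\mf$. Let me think about what structure we have and how the map is supposed to work.

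**Setup.** We have $R$ regular local of dimension $d$ with regular system of parameters $x_1,\dots,x_d$, residue field $k_R$ algebraic over $k$, and $\Dc_R$ free over $R$ with basis $\{\partial^\alpha\}$. By (Fact 3) $\Dc_R$ is simple. By the introduction's remark, $N = \Dc_R/\Dc_R\mf$ is simple because $\dim_{k_R} N^{\mf} = 1$ (Kashiwara). So the left-hand side $\bigoplus_{j=0}^n \bigoplus_{|\alpha|=j} \Dc_R/\Dc_R\mf$ is a direct sum of simple modules, i.e. semisimple. The number of summands: for each $j$ from $0$ to $n$, we sum over multi-indices $\alpha$ with $|\alpha|=j$. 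The total count is $\sum_{j=0}^n \binom{j+d-1}{d-1} = \binom{n+d}{d}$, which should equal $\dim_{k_R}(\Dc_R/\Dc_R\mf^{n+1})^{\mf^{n+1}}$... actually $\dim_{k_R}\Dc_R/\Dc_R\mf^{n+1}$ as... no, these are infinite-dimensional. Let me reconsider: the target is a cyclic $\Dc_R$-module of finite length. The left side is semisimple of length $\binom{n+d}{d}$. So the content is that the target has this length and $\psi$ is an isomorphism.

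Now let me think about why $\psi$ should work. The map sends the class of $P_{\alpha,j}$ in the $(\alpha,j)$-summand (with $|\alpha|=j$) to $P_{\alpha,j}\, Q_{n-j,d}\, X^\alpha \bmod \Dc_R\mf^{n+1}$. This is a left $\Dc_R$-module map by construction (left multiplication by $P_{\alpha,j}$ commutes with the assignment, since $Q_{n-j,d} X^\alpha$ is a fixed element multiplied on the right). The key operator is $Q_{m,d} = \prod_{i=1}^d (1+\partial_{x_i}x_i)_m$ where $(a)_m = a(a+1)\cdots(a+m-1)$ is the Pochhammer symbol. The element $\partial_{x_i}x_i$ is the Euler/degree operator in the $i$th variable: on $R$ it acts, and more importantly, modulo various powers of $\mf$ it measures degree.

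**Strategy for well-definedness and isomorphism.**

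First I would verify that each summand map is well-defined: I must check $\mf \cdot (Q_{n-j,d} X^\alpha) \subset \Dc_R\mf^{n+1}$, i.e. that $x_i\, Q_{n-j,d}\, X^\alpha \in \Dc_R\mf^{n+1}$ for each $i$. Here the commutation relations between $x_i$ and $\partial_{x_j}x_j$ are central: $[\partial_{x_i}x_i, x_i] = x_i$, so $x_i$ conjugates $(1+\partial_{x_i}x_i)_m$ by shifting the argument, $x_i\,(1+\partial_{x_i}x_i)_m = (2+\partial_{x_i}x_i)_m\, x_i = \frac{?}{?}\cdots$; more precisely one uses $x_i\, f(\partial_{x_i}x_i) = f(\partial_{x_i}x_i - 1)x_i$ to push the $x_i$ to the right, picking up a shift in the Pochhammer factor. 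Multiplying $x_i$ into $Q_{n-j,d}X^\alpha$ raises $|\alpha|$ by one in the $i$th slot, turning the monomial $X^\alpha$ into something in $\mf^{j+1}$, and the degree-counting done by the Euler operators arranges that the accumulated power of $\mf$ reaches $n+1$, landing in $\Dc_R\mf^{n+1}$. This is the computational heart and I expect it to be the main obstacle: establishing the precise identity showing $x_i\cdot(Q_{n-j,d}X^\alpha) \equiv 0 \bmod \Dc_R\mf^{n+1}$, which is what makes the image killed by $\mf$ and hence factors through $\Dc_R/\Dc_R\mf$ on each summand.

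Second, having a well-defined $\Dc_R$-map $\psi$ from a semisimple module, I would show $\psi$ is surjective and injective. For surjectivity, since the target is generated over $\Dc_R$ by the class of $1 \bmod \Dc_R\mf^{n+1}$, it suffices to show $1$ lies in the image; equivalently, I would show the images $\{Q_{n-j,d}X^\alpha \bmod \Dc_R\mf^{n+1}\}_{|\alpha|=j,\,0\le j\le n}$ together with left $\Dc_R$-multiplication generate everything, by a downward/upward induction on $j$ using that $\mf^j/\mf^{j+1}$ is spanned by the classes of the $X^\alpha$ with $|\alpha|=j$. For injectivity, because the source is semisimple and each summand is the simple module $\Dc_R/\Dc_R\mf$, the kernel is a sub-sum of summands; so $\psi$ is injective iff it is nonzero on each simple summand, i.e. $Q_{n-j,d}X^\alpha \notin \Dc_R\mf^{n+1}$ for each admissible $(\alpha,j)$. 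I would check this using (Fact 1): writing $Q_{n-j,d}X^\alpha$ in the free left $R$-basis $\{\partial^\beta\}$ and verifying its leading term (lowest-order, or the appropriate graded piece) has a unit coefficient, so it cannot lie in $\Dc_R\mf^{n+1}$.

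**Concluding.** Once $\psi$ is shown to be an isomorphism, part (2) is done, and part (1) follows: $\Dc_R/\Dc_R\mf^{n+1}$ is isomorphic to a direct sum of copies of the simple module $\Dc_R/\Dc_R\mf$, hence semisimple, and $\Dc_R/\Dc_R\mf$ is simple as noted. I expect the Pochhammer commutation identity — explicitly, the formula $x_i\,(1+\partial_{x_i}x_i)_m = (\partial_{x_i}x_i)_{m+1}/(\partial_{x_i}x_i)\cdots$ rewritten cleanly, or equivalently the relation $(1+\partial_{x_i}x_i)_m x_i = x_i (2+\partial_{x_i}x_i)_m$ and its consequences — to be the one delicate computation. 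The reason $Q$ is built from Pochhammer symbols starting at $1+\partial_{x_i}x_i$ is precisely so that the factor vanishes in the right degree: on a monomial of degree $c$ in $x_i$, the operator $\partial_{x_i}x_i$ acts as $(c+1)$, and the product $(1+\partial_{x_i}x_i)_{n-j}$ is engineered so that after multiplying by $X^\alpha$ and then by any $x_i$, one lands in $\mf^{n+1}$. I would isolate this as a preliminary lemma on the one-variable Pochhammer operators and then take the $d$-fold tensor/product, reducing the whole argument to the $d=1$ case handled by a clean induction on $n$.
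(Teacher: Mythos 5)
There is a genuine gap in your injectivity argument. You claim that because the source is semisimple, ``the kernel is a sub-sum of summands, so $\psi$ is injective iff it is nonzero on each simple summand.'' That criterion is valid only when the simple summands are pairwise non-isomorphic; here \emph{all} summands are isomorphic to $\Dc_R/\Dc_R\mf$, so a submodule of the source need not be a coordinate sub-sum (consider $S\oplus S\to S$, $(a,b)\mapsto a+b$, which is nonzero on each summand but not injective). The correct criterion is that the images $m_\alpha=\psi_\alpha(1_\alpha)$ of the canonical generators must be $k_R$-linearly independent in the invariant space $\bigl(\Dc_R/\Dc_R\mf^{n+1}\bigr)^{\mf}$, and establishing this independence is the genuinely delicate step that your proposal does not address. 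The paper handles it with a weight argument: the Euler operator $\nabla=\sum_i x_i\partial_{x_i}$ commutes with $Q_{n-j}$ and satisfies $[\nabla,X^\alpha]=|\alpha|X^\alpha$, which yields a $k_R$-linear operator $E$ on $\sum k_R m_\alpha$ with $Em_\alpha=(d-|\alpha|)m_\alpha$, separating the weights and forcing all coefficients in a putative relation to vanish. Without this (or some substitute), your argument does not close, even granting surjectivity.

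Two further points. First, you correctly identify the well-definedness computation $x_i\,Q_{n-j,d}X^\alpha\in\Dc_R\mf^{n+1}$ as the computational heart but leave it as a to-do; the clean way is the identity $x_i^k(k+\partial_{x_i}x_i)=\partial_{x_i}x_i^{k+1}$, which gives $x_i(1+\partial_{x_i}x_i)_{n-j}=\partial_{x_i}^{n-j}x_i^{n+1-j}$ and hence the required containment since $x_i^{n+1-j}X^\alpha\in\mf^{n+1}$ when $|\alpha|=j$; your sketched conjugation approach leads to the same place, so this is an execution gap rather than a wrong idea. Second, for surjectivity the paper does not argue by direct generation as you propose, but by a length count: the submodule $N_n$ generated by the image of $\mf^n$ is generated by its $\mf$-invariants, its length equals the number of degree-$n$ monomials by the semisimplicity lemma, and induction on $n$ via $(\Dc_R/\Dc_R\mf^{n+1})/N_n\cong\Dc_R/\Dc_R\mf^n$ shows both sides of (2) have equal length, so injectivity alone finishes the proof. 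Your direct generation induction might be made to work, but it needs care because $Q_{n-j,d}$ does not reduce to $Q_{n-1-j,d}$ modulo $N_n$ without an additional computation.
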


\begin{lemma}\label{simplelemma}  Let $M$ be a  $\Dc_R$-module which is
  generated by its $\mf$ -invariant subspace $M^{\mf}= \{m \in M \
  \vert \ \mf\cdot m =0\}$.  Then $M$ is semi-simple. More precisely,
  if $S$ is a basis of the $k_R$-vector space $M^{\mf}$, we have
  \begin{displaymath}
    M= \bigoplus_{v\in S} \Dc_R v,
  \end{displaymath}
  where all the modules $\Dc_R v$ are isomorphic to the simple module $
  \Dc_R/\Dc_R \mf$.
\end{lemma}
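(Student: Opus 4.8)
The plan is to realize $M$ as $\bigoplus_{v\in S}\Dc_R v$ by assembling three ingredients: simplicity of $N:=\Dc_R/\Dc_R\mf$ (so that each cyclic piece $\Dc_R v$ is a copy of it), surjectivity of the obvious comparison map, and — the real work — directness of the sum. First I would fix, for each $v\in S$, the map $\Dc_R\to\Dc_R v$, $P\mapsto Pv$. Since $\mf v=0$ and $\Dc_R\mf$ is the left ideal generated by $\mf$, we have $(\Dc_R\mf)v=\Dc_R(\mf v)=0$, so this factors through a surjection $N\twoheadrightarrow\Dc_R v$. Once $N$ is known to be simple, this surjection is nonzero (it sends $\bar 1\mapsto v\neq 0$, as $v$ lies in a basis) and hence an isomorphism, giving $\Dc_R v\cong N$.

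Second, I would establish simplicity of $N$ together with the fact that $N^{\mf}$ is one-dimensional, since both follow from one computation. By the description of $N=\Dc_R/\Dc_R\mf$ recorded after Theorem~\ref{semisimplefinitesupport} via (Fact 1), $N$ has $k_R$-basis $\{\partial^\beta\bar 1\}_{\beta\in\Nb^d}$; moreover, from $[\partial_{x_i},x_i]=1$ and $x_i\bar 1=0$ one reads off the lowering relation $x_i\cdot\partial^\beta\bar 1=-\beta_i\,\partial^{\beta-e_i}\bar 1$ (in particular $X^\alpha\partial^\beta\bar 1=0$ unless $\beta\geq\alpha$ componentwise). Given a nonzero submodule and a nonzero $u=\sum_\beta c_\beta\partial^\beta\bar 1$ in it, I would pick $\beta^0$ of maximal total degree with $c_{\beta^0}\neq 0$ and apply $X^{\beta^0}$: every surviving term needs $\beta\geq\beta^0$, which together with the degree maximality forces $\beta=\beta^0$, so $X^{\beta^0}u$ is a nonzero scalar multiple of $\bar 1$ (nonzero because $\beta^0!$ is invertible in characteristic $0$ and $c_{\beta^0}\neq 0$). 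Thus $\bar 1$ lies in the submodule and $N$ is simple. The same lowering relation shows that $x_iu=0$ for all $i$ forces $c_\beta=0$ for $\beta\neq 0$, so $N^{\mf}=k_R\bar 1$ is one-dimensional.

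Third, surjectivity. Because $\mf$ acts on $M^{\mf}$ through $R/\mf=k_R$, the space $M^{\mf}$ is exactly the $k_R$-span of $S$, and the hypothesis that $M$ is generated by $M^{\mf}$ reads $M=\Dc_R M^{\mf}$. Since $k_R v\subseteq Rv\subseteq\Dc_R v$ for each $v$, this gives $M=\sum_{v\in S}\Dc_R v$; equivalently the comparison map $\Phi:\bigoplus_{v\in S}N\to M$, $(P_v)_v\mapsto\sum_v P_v v$, is surjective.

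The main obstacle is directness, i.e.\ injectivity of $\Phi$, and I would handle it by comparing $\mf$-invariants rather than wrestling directly with relations $\sum_v P_v v=0$. On invariants $\Phi$ carries the basis $\{\bar 1_v\}$ of $(\bigoplus_v N)^{\mf}=\bigoplus_v N^{\mf}$ bijectively onto the basis $S$ of $M^{\mf}$ (using that $N^{\mf}$ is one-dimensional), so $\Phi$ is injective on $\mf$-invariants. It then suffices to show that any nonzero submodule $K\subseteq\bigoplus_v N$ has $K^{\mf}\neq 0$, applied to $K=\ker\Phi$. For this I would invoke local $\mf$-nilpotence of $\bigoplus_v N$: each element is killed by some power $\mf^m$, since a monomial $X^\gamma$ with $|\gamma|$ exceeding the degree annihilates $\partial^\beta\bar 1$. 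Taking $0\neq w\in K$ and the least $m$ with $\mf^m w=0$, any nonzero $w'=g w$ with $g\in\mf^{m-1}$ lies in $K$ and satisfies $\mf w'=0$, so $w'\in K^{\mf}\setminus\{0\}$. Hence $(\ker\Phi)^{\mf}\neq 0$ would contradict injectivity on invariants, forcing $\ker\Phi=0$. Therefore $\Phi$ is an isomorphism, exhibiting $M=\bigoplus_{v\in S}\Dc_R v$ with each summand $\cong\Dc_R/\Dc_R\mf$, and in particular $M$ is semi-simple.
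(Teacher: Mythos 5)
Your proof is correct and follows essentially the same route as the paper: identify each $\Dc_R v$ with the simple module $N=\Dc_R/\Dc_R\mf$, and obtain directness of the sum from the two facts that every nonzero submodule of a locally $\mf$-nilpotent module has nonzero $\mf$-invariants and that the comparison map is injective on invariants because $S$ is a basis. The only differences are in presentation: you prove simplicity of $N$ by a direct lowering-operator computation, where the paper instead computes $\dim_{k_R}N^{\mf}=1$ using the right $R$-module basis $\{\partial^\alpha a_\alpha\}$ and then invokes the same ``nonzero submodules have nonzero invariants'' principle; and your last step spells out the injectivity argument that the paper compresses into its final sentence.
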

\begin{proof}

  We first note that if $L$ is a $\Dc_R$-module of finite type which
  is generated by the invariant space $L^{\mf}$, and this space is
  one-dimensional over $k_R$, then $L$ is simple.  This follows since
  any element in $L$ is killed by a sufficiently high power of $\mf$,
  so if $L_1$ is a non-zero submodule we have $L_1^{\mf}\neq 0$. Hence
  $L_1^{\mf} = L^{\mf}$, which gives $L_1=L$.

  To see that the module $N= \Dc_R/\Dc_R \mf$ is simple, by the
  previous paragraph it suffices to prove that $N^{\mf}$ is
  one-dimensional over $k_R$.  So if $P\in \Dc_R$ and $ P \omod \Dc_R
  \mf \in N^{\mf}$, i.e.  $\mf P \subset \Dc_R\mf $, we need to see
  that $P \in R + \Dc_R \mf$.  Expressed in a regular system of
  parameters $P= \sum_{\alpha} \partial^\alpha a_\alpha $ we have
  $(x_1, \dots , x_d)\cdot \sum_{\alpha} \partial^\alpha a_\alpha
  \subset \Dc_R \cdot (x_1, \dots , x_d)$. This implies, from the fact
  that the differential operators $\partial^\alpha$ form a basis of
  the right $R$-module $\Dc_R$, that $a_\alpha \in (x_1, \dots , x_d)$
  when $|\alpha | >0 $, and therefore $P\in R + \Dc_R \mf$.

  If $v\in M^{\mf}$, then there is a canonical non-zero homomorphism $
  \Dc_R/\Dc_R \mf \to \Dc_R v$, which is injective and has a simple
  image by the previous paragraph.  The canonical surjective
  homomorphism
  \begin{displaymath}
    \bigoplus_{v\in S} \Dc_R v \to M
  \end{displaymath}
  is an isomorphism since the left hand side is semi-simple and the
  restriction to any of its simple terms is non-zero.
\end{proof}
\begin{pfof}{\Theorem{\ref{semisimplefinitesupport}} and
    \Theorem{\ref{decompDmod}}} The decomposition of $M$ over the
  support is obvious, so one can assume that $\supp M = \{x\}$ is a
  single closed point in $X$, and thus $M$ can be regarded as a
  $\Dc_R$-module, where $R$ is the local ring $\Oc_{X,x}$.  Let $M^0$
  be an $R$ -submodule of finite type that generates $M$, hence
  $\mf^{n+1} M^0 =0$ for high $n$; let $n$ be the highest integer such
  that $\mf^n M^0 \neq 0$.  Put $M^0_i = \mf^i M^0$ and let $M_i $ be
  the $\Dc_R$-module it generates, so we have a filtration by
  $\Dc_R$-modules $M_n \subset M_{n-1} \subset \cdots \subset M_0=M$,
  and exact sequences
  \begin{displaymath}
    0 \to     M_{i+1} \to M_{i} \to M_{i}/M_{i+1}\to 0,
  \end{displaymath}
  Then $M_n $ and each quotient $M_{i}/M_{i+1}$ is generated by its
  $\mf$-invariants, so at any rate $M$ is a successive extension of
  $\Dc_R$-modules that are generated by $\mf$-invariants. All these
  modules can be decomposed into a direct sum of simple modules that
  are isomorphic to the module $\Dc_R/\Dc_R \mf$.  It remains to see
  that $M$ is a direct sum of such modules. To see this first note
  that $M$ is a quotient of a direct sum of modules of the form
  $\Dc_R/\Dc_R\mf^{n+1}$ for different non-negative integers $n$, so
  to see that $M$ is semi-simple it suffices to see that $
  \Dc_R/\Dc_R\mf^{n+1}$ is semi-simple, and this follows if we prove
  (2) in \Theorem{\ref{decompDmod}}.

  We have $x_i^k (k+\partial_{x_i} x_i) = \partial_{x_i} x_i^{k+1} $,
  hence $x_i(1+\partial_{x_i}x_i)_{n-j} = \partial_{x_i}^{n-j}
  x_i^{n+1-j} $. Therefore $x_i Q_{n-j,d}(x_1, \dots , x_d) X^{\alpha}
  = Q_{n-j,d-1}(x_1, \dots, \hat x_i, \cdots , x_d)
  x_i^{n+1-j}X^\alpha $, so if $|\alpha| =j$, then
  \begin{displaymath}\tag{$*$}
    \mf Q_{n-j,d}(x_1, \dots , x_d) X^{\alpha} \subset \Dc_R \mf^{n+1}.
\end{displaymath}
 Therefore there exists a
  homomorphism of $\Dc_R$ -modules
  \begin{eqnarray*}
    \psi_{\alpha} :   \frac {\Dc_R}{\Dc_R \mf} & \to &  \frac {\Dc_R}{\Dc_R \mf^{n+1}}\\
    P  &\mapsto &  P  Q_{n-j,d}(x_1, \dots , x_d) X^{\alpha} \omod \Dc_R \mf^{n+1},
  \end{eqnarray*}
  and we put $\psi_\alpha (1_{\alpha})=m_{\alpha} \in (\frac
  {\Dc_R}{\Dc_R \mf^{n+1}})^{\mf} $, where $1_{\alpha}$ is the cyclic
  generator of the term with index $\alpha$ in the right side of (2).

  The fact that any differential operator $P\in \Dc_R$ has a unique
  expansion $P = \sum \partial^\alpha a_\alpha$, $a_\alpha \in R$,
  implies that $Q_{n-j,d}(x_1, \dots , x_d) X^{\alpha} \not \in \Dc_R
  \mf^{n+1}$, when $|\alpha | \leq j$; hence $\psi_\alpha \neq 0$.

  \Lemma{\ref{simplelemma}} implies that $\psi$ is injective if we
  first prove that the vectors $m_\alpha$ are linearly independent in
  the $k_R$-vector space $(\frac {\Dc_R}{\Dc_R
    \mf^{n+1}})^{\mf}$. Assume that we have a linear relation
  \begin{displaymath}
    \sum_{|\alpha| \leq n} \lambda_{\alpha} m_{\alpha} =0, \quad     \lambda_{\alpha}    \in k_A, 
  \end{displaymath}
which means 
\begin{displaymath}
  \sum_{|\alpha|\leq n} \hat \lambda_{\alpha} Q_{n-j}X^\alpha \in \Dc \mf^{n+1},
  \quad \hat \lambda_\alpha \in R,\quad 
  \hat \lambda_\alpha \omod \mf = \lambda_\alpha.
\end{displaymath}
Defining the Euler operator $\nabla = \sum_{i=1}^d x_i \partial_{x_i}$
we have  $[\nabla,  Q_{n-j}] =0$, $[\nabla, X^\alpha]=
|\alpha|X^\alpha$, and
\begin{displaymath}
  \hat \lambda_{\alpha} Q_{n-j}X^\alpha \nabla = 
 (d -  |\alpha|)\hat \lambda_\alpha    Q_{n-j}X^\alpha  - \nabla (\hat
  \lambda_\alpha)    Q_{n-j}X^\alpha  +   (\nabla-d) \hat \lambda_{\alpha} Q_{n-j}X^\alpha .
\end{displaymath}
Here the two last terms on the right belong to $\Dc_R \mf^{n+1}$ due
to \thetag{$*$}, after noting that $\nabla (\hat \lambda_\alpha) \in
\mf$ and $\nabla -d = \sum_{i=1}^d \partial_{x_i}x_i $. Therefore we can
define a $k_R$-linear action $E$ on the linear space
$\sum_{|\alpha|\leq n} k_R
m_\alpha\subset (\frac {\Dc_R}{\Dc_R \mf^{n+1}})^{\mf}$ such that $E
m_\alpha = (d-|\alpha|)m_\alpha$. A standard weight argument now implies
that all the coefficients $\lambda_\alpha =0$.

It remains to prove that $\psi$ is an isomorphism.  Let $N_n\subset
\Dc_R/\Dc_R \mf^{n+1}$ be the submodule that is generated by the
canonical projection of $ \mf^n $ in $\Dc_R/\Dc_R \mf^{n+1}$.  Then
$N_n$ is generated by its $\mf$-invariants and $\dim_{k_R} N_n ^\mf $
equals the number of monomials of degree $n$ in $d$ variables, which
is thus equal to the length of $N_n$ by
\Lemma{\ref{simplelemma}}. Since $\Dc_R/\Dc_R \mf^{n+1}/N_n =
\Dc_R/\Dc_R \mf^n$, an induction over $n$ gives that the lengths of
both sides in (2) are equal. Since $\psi$ is injective this implies
that $\psi$ is an isomorphism.
\end{pfof}

\begin{remark}
  \begin{enumerate}
  \item The proof gives
    \begin{displaymath}
      (\frac {\Dc_R}{\Dc_R \mf^{n+1}})^{\mf} =
      \sum_{|\alpha|\leq n} k_R Q_{n-j,d}(x_1, \dots , x_d) X^{\alpha}
      \mod \Dc_R \mf^{n+1}.
  \end{displaymath}

\item Given a resolution as in \thetag{\ref{equation}}, then (2) in
  \Theorem{\ref{decompDmod}} will give a decomposition of $M$.
  \end{enumerate}
\end{remark}
\begin{example} Let $A_1(k)$ be the Weyl algebra in one variable over
  a field $k$ of characteristic $0$, and consider a maximal ideal
  $\mf$ in the polynomial ring $ k[x]\subset A_1(k)$; let $\partial_x$
  be the $k$-linear derivation of $k[x]$ such that
  $\partial_x(x)=1$. By \Theorem{\ref{semisimplefinitesupport}}, $M_l=
  A_1(k)/A_1(k)\mf^{l+1}= A_1(k) m_l$ is a semi-simple $A_1(k)$-module
  (here $m_l= 1 \omod A_1(k) \mf^{l+1}$).  The localisation $R=
  k[x]_{\mf}$ has a regular system of parameters formed by a generator
  $x_1$ of the principal ideal $\mf$, and given $x_1$ there exists a
  unique derivation $\partial_{x_1}\in T_{R/k}$ such that
  $\partial_{x_1}(x_1) =1$. The $\Dc_{R/k}$-module $R\otimes_{k[x]}M_l$
  can be decomposed as follows:
  \begin{eqnarray*}
    &&    \bigoplus_{i=0}^l R\otimes_{k[x]} M_0  \to  R\otimes_{k[x]}M_l,\\
    (P_im_{0,i})& \mapsto &  P_0\cdot x_1^lm_l + P_1\cdot (1+ \partial_{x_1}\cdot x_1) x_1^{l-1}m_l + \cdots  \\  &+& P_{l}\cdot
    (1+ \partial_{x_1}\cdot x_1) (2 + \partial_{x_1}\cdot x_1)
    \cdots (l+ \partial_{x_1}\cdot x_1 ) m_l.
\end{eqnarray*} 
The isomorphism $ M_0 \oplus \cdots \oplus M_0 \to M_l$, where there
are $l+1$ terms on the left, is defined similarly. We notice that
although $\partial_{x_1}x_1 $ in general does not act on $k[x]$, it has a
well-defined action on $M_l$. Note also that the invariant space
$M_0^{\mf} $ of the simple module $M_0$ is 1-dimensional over the
residue field $k_R$ and thus its dimension over $k$ equals the degree
of the field extension $k_R/k$.

One can reverse the roles of $\partial_x$ and $x$ in the Weyl algebra
$A_1(k)$, and instead decompose modules according to their support in
$\Spec k[\partial_{x}]$.  Thus if $P\in k[\partial_x] \subset A_1(k)$
is a differential operator with constant coefficients, then
\begin{displaymath}
  \frac{A_1(k)}{A_1(k) P} \cong  \frac{A_1(k)}{A_1(k) P_1}\oplus \cdots  \oplus \frac{A_1(k)}{A_1(k) P_r}
\end{displaymath}
where $P= P_1 \cdots P_r $ is a factorisation into irreducible
polynomials, where repetitions may occur.  It is a good exercise to
write down an isomorphism for some concrete polynomial $P$ using
``Pochhammer'' operators $Q_{j,1}(P_i)$ when $P$ has multiple factors.
\end{example}

\begin{bibsection}
  \begin{biblist} 
\bib{borel:Dmod}{book}{
  author={Borel, A},
  author={Grivel, P.-P},
  author={Kaup, B},
  author={Haefliger, A},
  author={Malgrange, B},
  author={Ehlers, F},
  title={Algebraic $D$-modules},
  series={Perspectives in Mathematics},
  publisher={Academic Press Inc.},
  address={Boston, MA},
  date={1987},
  volume={2},
}

\bib{EGA4:4}{article}{
  author={Grothendieck, A},
  title={\'El\'ements de g\'eom\'etrie alg\'ebrique. IV. \'Etude locale des sch\'emas et des morphismes de sch\'emas IV},
  language={French},
  journal={Inst. Hautes \'Etudes Sci. Publ. Math.},
  number={32},
  date={1967},
  pages={361},
}

\bib{kallstrom-tadesse:liehilbert}{article}{
  author={K{\"a}llstr{\"o}m, Rolf},
  author={Tadesse, Yohannes},
  title={Hilbert series of modules over Lie algebroids },
  date={2011},
  eprint={math/1106.5395},
  url={http://arXiv.org/abs/1106.5395},
}

\bib{matsumura}{book}{
  author={Matsumura, Hideyuki},
  title={Commutative ring theory},
  publisher={Cambridge University Press},
  date={1986},
}

\bib{stafford:modulestructure}{article}{
  author={Stafford, J. T.},
  title={Module structure of Weyl algebras},
  journal={J. London Math. Soc. (2)},
  volume={18},
  date={1978},
  number={3},
  pages={429--442},
  issn={0024-6107},
  review={\MR {518227 (80i:16040)}},
  doi={10.1112/jlms/s2-18.3.429},
}
  \end{biblist}
\end{bibsection}

\end{document}